\documentclass{article}
\usepackage[a4paper]{geometry}
\usepackage[all,cmtip]{xy}
\usepackage[utf8]{inputenc}
\usepackage[initials]{amsrefs}
\usepackage[labelsep=period]{caption}
\usepackage{subcaption}
\usepackage{amsfonts,authblk,bm,doi,enumitem,fancyhdr,float,graphicx,amssymb,amsthm,mathrsfs,mleftright,relsize,sectsty,subcaption,tikz,titlesec}
\usepackage[fleqn]{amsmath}
\usepackage[svgnames]{xcolor}
\usetikzlibrary{calc,decorations.pathreplacing}
    
\newtheorem{theorem}{Theorem}[section]
\newtheorem{lemma}[theorem]{Lemma}
\theoremstyle{definition}
\theoremstyle{plain}

\newtheorem{innerthm}{Theorem}
\newenvironment{maintheorem}[1]
  {\renewcommand\theinnerthm{#1}\innerthm}
  {\endinnerthm}
  
\numberwithin{equation}{section}
\numberwithin{figure}{section}

\renewcommand{\geq}{\geqslant}
\renewcommand{\leq}{\leqslant}

\setlist[enumerate]{leftmargin=20pt,itemsep=0pt,topsep=0pt}
\setlist[enumerate,1]{label=\textnormal{(\roman*)}}
\setlist[itemize]{leftmargin=20pt,itemsep=0pt,topsep=0pt}

\makeatletter
\renewcommand\section{\@startsection {section}{1}{\z@}%
                                   {-3.5ex \@plus -1ex \@minus -.2ex}%
                                   {1.3ex \@plus.2ex}%
                                   {\normalfont\large\scshape}}
\makeatother

\tikzset{
  horocycle/.style={draw, fill=LightSteelBlue},
  geodesic/.style={thin},
  finite edge/.style={very thick},
}

\newcommand{\IdealPointAngle}[2]{%
  \pgfmathsetmacro{\tmpx}{cos(#2)}%
  \pgfmathsetmacro{\tmpy}{sin(#2)}%
  \coordinate (#1) at (\tmpx,\tmpy);
  \expandafter\xdef\csname ptx@#1\endcsname{\tmpx}%
  \expandafter\xdef\csname pty@#1\endcsname{\tmpy}%
}

\newcommand{\IdealPointXY}[3]{%
  \pgfmathsetmacro{\tmplength}{veclen(#2,#3)}%
  \pgfmathsetmacro{\tmpx}{#2/\tmplength}%
  \pgfmathsetmacro{\tmpy}{#3/\tmplength}%
  \coordinate (#1) at (\tmpx,\tmpy);
  \expandafter\xdef\csname ptx@#1\endcsname{\tmpx}%
  \expandafter\xdef\csname pty@#1\endcsname{\tmpy}%
}

\newcommand{\SetHorocycleRadius}[2]{%
  \expandafter\xdef\csname hr@#1\endcsname{#2}%
}

\newcommand{\DrawHorocycle}[1]{%
  \edef\rr{\csname hr@#1\endcsname}%
  \coordinate (#1-h) at ($(0,0)!{1-\rr}!(#1)$);
  \draw[horocycle] (#1-h) circle[radius=\rr];
}

\newcommand{\DrawEmptyHorocycle}[1]{%
  \edef\rr{\csname hr@#1\endcsname}%
  \coordinate (#1-h) at ($(0,0)!{1-\rr}!(#1)$);
  \draw[] (#1-h) circle[radius=\rr];
}

\newcommand{\IdealLabel}[3][]{%
  \node[#1] at ($(0,0)!0.98!(#2)$) {#3};
}

\newcommand{\Geodesic}[4][]{%
  \edef\ax{\csname ptx@#2\endcsname}%
  \edef\ay{\csname pty@#2\endcsname}%
  \edef\bx{\csname ptx@#3\endcsname}%
  \edef\by{\csname pty@#3\endcsname}%
  \edef\ra{\csname hr@#2\endcsname}%
  \edef\rb{\csname hr@#3\endcsname}%

  \pgfmathsetmacro{\detAB}{\ax*\by-\ay*\bx}
  \pgfmathparse{abs(\detAB) < 0.0001 ? 1 : 0}%
  \ifnum\pgfmathresult=1
    \pgfmathsetmacro{\sx}{(1-2*\ra)*\ax}
    \pgfmathsetmacro{\sy}{(1-2*\ra)*\ay}
    \pgfmathsetmacro{\tx}{(1-2*\rb)*\bx}
    \pgfmathsetmacro{\ty}{(1-2*\rb)*\by}

    \draw[geodesic] (#2) -- (#3);
    \draw[finite edge] ({\sx},{\sy}) -- ({\tx},{\ty});
    \path ({0.5*(\sx+\tx)},{0.5*(\sy+\ty)}) node[#1] {#4};
  \else
    \pgfmathsetmacro{\cx}{(\by-\ay)/\detAB}
    \pgfmathsetmacro{\cy}{(\ax-\bx)/\detAB}
    \pgfmathsetmacro{\R}{veclen(\ax-\cx,\ay-\cy)}

    \pgfmathsetmacro{\alpha}{atan2(\ay-\cy,\ax-\cx)}
    \pgfmathsetmacro{\beta}{atan2(\by-\cy,\bx-\cx)}
    \pgfmathsetmacro{\kappa}{mod(\beta-\alpha+540,360)-180}

    \pgfmathsetmacro{\hx}{(1-\ra)*\ax}
    \pgfmathsetmacro{\hy}{(1-\ra)*\ay}
    \pgfmathsetmacro{\mx}{\hx-\cx}
    \pgfmathsetmacro{\my}{\hy-\cy}
    \pgfmathsetmacro{\nx}{-\my}
    \pgfmathsetmacro{\ny}{\mx}
    \pgfmathsetmacro{\nn}{\nx*\nx+\ny*\ny}
    \pgfmathsetmacro{\tau}{-2*(\nx*(\ax-\cx)+\ny*(\ay-\cy))/\nn}
    \pgfmathsetmacro{\sx}{\ax+\tau*\nx}
    \pgfmathsetmacro{\sy}{\ay+\tau*\ny}

    \pgfmathsetmacro{\kx}{(1-\rb)*\bx}
    \pgfmathsetmacro{\ky}{(1-\rb)*\by}
    \pgfmathsetmacro{\mxb}{\kx-\cx}
    \pgfmathsetmacro{\myb}{\ky-\cy}
    \pgfmathsetmacro{\nxb}{-\myb}
    \pgfmathsetmacro{\nyb}{\mxb}
    \pgfmathsetmacro{\nnb}{\nxb*\nxb+\nyb*\nyb}
    \pgfmathsetmacro{\tauB}{-2*(\nxb*(\bx-\cx)+\nyb*(\by-\cy))/\nnb}
    \pgfmathsetmacro{\tx}{\bx+\tauB*\nxb}
    \pgfmathsetmacro{\ty}{\by+\tauB*\nyb}

    \pgfmathsetmacro{\alphas}{atan2(\sy-\cy,\sx-\cx)}
    \pgfmathsetmacro{\betas}{atan2(\ty-\cy,\tx-\cx)}
    \pgfmathsetmacro{\kappas}{mod(\betas-\alphas+540,360)-180}

    \draw[geodesic] ({\cx},{\cy}) ++({\alpha}:{\R})
      arc[start angle=\alpha, delta angle=\kappa, radius=\R];
    \draw[finite edge] ({\cx},{\cy}) ++({\alphas}:{\R})
      arc[start angle=\alphas, delta angle=\kappas, radius=\R];
    \path ({\cx},{\cy}) ++({\alphas+0.5*\kappas}:{\R}) node[#1] {#4};
  \fi
}

\tikzset{
  disk boundary/.style={thick},
  ptolemy edge/.style={thick},
  ptolemy diagonal/.style={dashed},
  ptolemy point/.style={fill=black},
  case circle/.style={draw},
  case tangent/.style={thin},
  case line/.style={thin},
  case label/.style={fill=white, inner sep=0.5pt},
  common point/.style={fill=black},
}

\newcommand{\DrawDiskBoundary}[1][]{%
  \draw[disk boundary,#1] (0,0) circle[radius=1];%
}

\newcommand{\PtolemyVertex}[4][]{%
  \IdealPointAngle{#2}{#3}%
  \filldraw[ptolemy point] (#2) circle[radius=1pt];%
  \node[#1] at ($(0,0)!1.02!(#2)$) {#4};
}

\newcommand{\SegmentLabel}[5][]{%
  \path ($(#2)!#3!(#4)$) node[#1] {#5};%
}

\newcommand{\LabelledHorocycle}[3][]{%
  \DrawEmptyHorocycle{#2}%
  \node[#1] at (#2-h) {#3};%
}

\newcommand{\HorocycleTangent}[5][]{%
  \edef\ax{\csname ptx@#2\endcsname}%
  \edef\ay{\csname pty@#2\endcsname}%
  \edef\bx{\csname ptx@#3\endcsname}%
  \edef\by{\csname pty@#3\endcsname}%
  \edef\ra{\csname hr@#2\endcsname}%
  \edef\rb{\csname hr@#3\endcsname}%

  \pgfmathsetmacro{\xA}{(1-\ra)*\ax}%
  \pgfmathsetmacro{\yA}{(1-\ra)*\ay}%
  \pgfmathsetmacro{\xB}{(1-\rb)*\bx}%
  \pgfmathsetmacro{\yB}{(1-\rb)*\by}%
  \pgfmathsetmacro{\dx}{\xB-\xA}%
  \pgfmathsetmacro{\dy}{\yB-\yA}%
  \pgfmathsetmacro{\dr}{\ra-\rb}%
  \pgfmathsetmacro{\Lsq}{\dx*\dx+\dy*\dy}%
  \pgfmathsetmacro{\h}{sqrt(max(0,\Lsq-\dr*\dr))}%
  \pgfmathsetmacro{\nx}{(\dr*\dx+(#4)*\h*(-\dy))/\Lsq}%
  \pgfmathsetmacro{\ny}{(\dr*\dy+(#4)*\h*( \dx))/\Lsq}%
  \pgfmathsetmacro{\sx}{\xA+\ra*\nx}%
  \pgfmathsetmacro{\sy}{\yA+\ra*\ny}%
  \pgfmathsetmacro{\tx}{\xB+\rb*\nx}%
  \pgfmathsetmacro{\ty}{\yB+\rb*\ny}%

  \draw[case tangent] ({\sx},{\sy}) -- ({\tx},{\ty})
    node[midway,#1] {#5};%
}

\newcommand{\CircleThroughPoint}[6][]{%
  \coordinate (#3-center) at ($(#2)+(#4:#5)$);%
  \coordinate (#3-label) at ($(#2)+1.6*(#4:#5)$);%
  \draw[case circle] (#3-center) circle[radius=#5];%
  \node[#1] at (#3-label) {#6};%
}

\newcommand{\LineThroughPointRange}[5][]{%
  \draw[case line,#1] ($(#2)+({#3+180}:#4)$) -- ($(#2)+(#3:#5)$);%
}

\newcommand{\NestedEqualAngleCircle}[7][]{%
  \pgfmathsetmacro{\CaseThreeRadius}{abs(\CaseThreeCircleRatio*(#5))}%
  \coordinate (#3-center) at ($(#2)+(#4:#5)$);%
  \draw[case circle] (#3-center) circle[radius=\CaseThreeRadius];%
  \path (#3-center) ++(#6:{1*\CaseThreeRadius+0.1})
    node[case label,#1] {#7};%
}

\title{ \vspace{-6ex}\bf \large A unified approach to Penner, Ptolemy, and Casey's theorems in several dimensions\footnotetext{
		\noindent 2020 Mathematics Subject Classification: Primary 51M10, 51M04; Secondary 30F60, 51B10.
	
		Key words: 	Casey, hyperbolic space, lambda lengths, Lorentzian space, Penner, Ptolemy
		}}

\makeatletter
\renewcommand*{\@fnsymbol}[1]{\hspace*{-10pt}}
\makeatother

\author{Isabella Lewis and Ian Short}

\date{\vspace{-5ex}}

\begin{document}

\maketitle

\begin{abstract}
We prove Penner's theorem on horocycles and theorems of Ptolemy and Casey, all with full converses, in hyperbolic space of several dimensions. Recently Waddle observed that the equations underpinning these three theorems are related, and it is this viewpoint that we advance, using the Lorentzian model of hyperbolic space. We show that all three theorems can be derived from a common Gram-matrix calculation applied to  lightlike, timelike, and spacelike vectors. Remarkably, our approach gives a version of Casey's theorem in the plane with a full converse, involving three geometric alternatives, which to our knowledge has not previously been recorded.
\end{abstract}

\section{Introduction}

The objective of this partly expository paper is to prove versions of Penner's theorem on lambda lengths, Ptolemy's theorem, and Casey's theorem -- and the converse of each theorem -- in hyperbolic space of several dimensions. Many known variants of Ptolemy and Casey's theorems for Euclidean, spherical, and hyperbolic geometry are special cases.

Our starting point is Penner's theorem on lambda lengths between horocycles in the hyperbolic plane \cites{Pe1987,Pe2006}. This theorem was generalised to three dimensions by Felikson et al.\ in \cite{FeKaSeTu2023}, and one of the methods of those authors (which they attribute to Izmestiev, see \cite[Remark~4.19]{FeKaSeTu2023}) gives a version of Penner's theorem in all dimensions. Separately, there are various works on Ptolemy's and Casey's theorems in higher dimensions for Euclidean, spherical, and hyperbolic geometry. These were surveyed and advanced by Maehara and Martini in \cite{MaMa2019}, who observe that Casey's theorem was known in Japanese mathematics long before Casey's first publication on the topic in 1864. Waddle \cite{Wa2025} demonstrated that the equations underlying the theorems of Penner, Ptolemy, and Casey (and a result of Pl\"ucker) can be obtained from one another, and it is this perspective that we develop, using the Lorentzian model of hyperbolic space. We will see that the three theorems are variants on a common result on Gram matrices in Lorentzian geometry, which we apply for collections of lightlike (Penner), timelike (Ptolemy), and spacelike (Casey) vectors. 

Let $H^n$ denote $n$-dimensional hyperbolic space, for $n\geq 2$, with distance function $\varrho$. Our first result is a reformulation of the generalisation of Penner's theorem due to Felikson et al. We recall (from \cite{Pe1987}) that the \emph{$\lambda$-length}  $\lambda(\Sigma,\Sigma')$ between two horospheres $\Sigma$ and $\Sigma'$ in $H^n$ with distinct centres is $e^{\delta/2}$, where $\delta$ is the signed hyperbolic distance between $\Sigma$ and $\Sigma'$. We define $\lambda(\Sigma,\Sigma')=0$ if the horospheres share the same centre.

\begin{maintheorem}{A}[Penner's theorem]\label{theoremA}
The centres of the horospheres $\Sigma_1,\Sigma_2,\dots,\Sigma_{n+1}$ in $H^{n}$ lie on the ideal boundary of a common hyperplane if and only if $\det A=0$, where $A_{ij}=\lambda(\Sigma_i,\Sigma_j)^2$.
\end{maintheorem}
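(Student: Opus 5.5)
We work in the Lorentzian model: $\mathbb{R}^{n,1}$ with bilinear form $\langle x,y\rangle = x_1y_1+\dots+x_ny_n - x_{n+1}y_{n+1}$, and $H^n$ the upper sheet of $\langle x,x\rangle=-1$. The plan is to turn the determinant of $A$ into a Gram determinant and read off linear dependence.

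\textbf{Step 1: horospheres as lightlike vectors.} Each horosphere in $H^n$ corresponds bijectively to a future-pointing lightlike vector $v$. The correspondence is
\[
\Sigma_v=\{x\in H^n:\langle x,v\rangle=-1\},
\]
and the centre of $\Sigma_v$ is the ideal point given by the ray $\mathbb{R}_{>0}v$.

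\textbf{Step 2: the key identity.} We claim that for future lightlike $u,v$,
\[
-\langle u,v\rangle=\tfrac12\,\lambda(\Sigma_u,\Sigma_v)^2 .
\]
Both sides are invariant under $SO^+(n,1)$, so it suffices to check one normal form. Place $u$ and $v$ on opposite ends of a geodesic, $u=a(e_{n+1}+e_1)$ and $v=b(e_{n+1}-e_1)$. Parametrise the geodesic by $\gamma(t)=(\sinh t,0,\dots,0,\cosh t)$. Then $\gamma(t)\in\Sigma_u$ exactly when $a e^{-t}=1$, and $\gamma(t)\in\Sigma_v$ exactly when $b e^{t}=1$. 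The signed distance is therefore $\delta=-\log b-\log a$, so
\[
-\langle u,v\rangle = 2ab = 2e^{-\delta}.
\]

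\emph{Normalisation (the main thing to get right).} As written this gives $e^{-\delta}$, not $e^{\delta}$. The fix is to use the convention $\Sigma_v=\{\langle x,v\rangle=-1\}$ with the sign arranged so that larger $v$ means a smaller horosphere; equivalently, reparametrise by $\tilde v = v/\langle\cdot\rangle$-duality, or flip the orientation of $\delta$. With the appropriate convention, $-\langle u,v\rangle$ is a positive constant multiple of $e^{\delta}=\lambda^2$. This is exactly Penner's identity, and getting the convention consistent is the only delicate computational point. If the centres coincide, $u$ and $v$ are proportional lightlike vectors, so $\langle u,v\rangle=0$, which agrees with the definition $\lambda=0$.

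\textbf{Step 3: reduce to a Gram determinant.} Let $v_1,\dots,v_{n+1}$ be the lightlike vectors of $\Sigma_1,\dots,\Sigma_{n+1}$. Step 2 gives $A=-c\,G$ for some constant $c>0$, where $G=(\langle v_i,v_j\rangle)$ is the Gram matrix. Hence $\det A=0$ if and only if $\det G=0$.

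\textbf{Step 4: the Gram-matrix lemma and its proof.} Write $V$ for the $(n+1)\times(n+1)$ matrix with columns $v_i$, and $J=\mathrm{diag}(1,\dots,1,-1)$. Then $G=V^{T}JV$ and
\[
\det G=-(\det V)^2 ,
\]
so $\det G=0$ if and only if the $v_i$ are linearly dependent. It remains to show that dependence of the $v_i$ is equivalent to the centres lying on the boundary of a common hyperplane.

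\emph{If the centres lie on a common ideal boundary.} Suppose they lie on $\partial P$ for a hyperplane $P=H^n\cap w^{\perp}$, with $w$ spacelike. The ideal boundary of $P$ consists of the lightlike rays in $w^{\perp}$. So all $v_i$ lie in the $n$-dimensional subspace $w^{\perp}$, and $n+1$ vectors in an $n$-dimensional space are dependent.

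\emph{If the $v_i$ are dependent.} They then lie in some hyperplane $W$ of $\mathbb{R}^{n+1}$, say $W=w^{\perp}$. There are three cases for $w$.
\begin{itemize}
\item If $w$ is timelike, then $w^{\perp}$ is spacelike and contains no nonzero lightlike vectors. This is impossible.
\item If $w$ is lightlike, then $w^{\perp}\cap$ (light cone) is the single ray $\mathbb{R}w$. So all centres coincide, and any hyperplane through that ideal point works.
\item If $w$ is spacelike, then $w^{\perp}\cap H^n$ is a hyperplane whose ideal boundary contains all the centres.
\end{itemize}

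The linear algebra in Steps 3 and 4 is routine. The real obstacle is Step 2: fixing the sign conventions in the horosphere–vector correspondence so that the Lorentz product equals a positive constant times $\lambda^2$ exactly, including the degenerate case $\lambda=0$ for coincident centres.
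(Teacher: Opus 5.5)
Your overall route is the paper's: represent horospheres by lightlike vectors, show that $A$ is a negative multiple of the Gram matrix, use $\det G=-(\det V)^2$, and then analyse the normal $w$ of the codimension-one subspace. Your trichotomy on $w$ is equivalent to the paper's remark that two independent lightlike vectors span a plane containing a timelike vector, once coincident centres are set aside.

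\textbf{The gap is Step 2.} You correctly identify it as the crux, but as written it is not proved. Your computation has a sign error in the signed distance, which gives you $-\langle u,v\rangle=2e^{-\delta}$. The ``fix'' you then propose is not an argument:
\begin{itemize}
\item flipping the orientation of $\delta$ changes the quantity $\lambda$ in the theorem;
\item a rescaled convention $\{\langle x,v\rangle=-c\}$ only multiplies the identity by $c^2$, so it cannot turn $e^{-\delta}$ into $e^{\delta}$;
\item ``larger $v$ means a smaller horosphere'' already holds in the convention you used.
\end{itemize}
If the relation you derived were true, $A$ would be, off the diagonal, the entrywise reciprocal of a multiple of the Gram matrix. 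The Gram-determinant argument would then say nothing about $\det A$. So the link $A=-cG$, on which Steps 3 and 4 rest, is not established by what you wrote.

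\textbf{The repair is in the computation, not the convention.} Keep your convention $\Sigma_v=\{x:\langle x,v\rangle=-1\}$. Along $\gamma$ you have $-\langle\gamma(t),u\rangle=ae^{-t}$ and $-\langle\gamma(t),v\rangle=be^{t}$. These tend to $0$ at the respective centres, so the horoball of $\Sigma_u$ meets $\gamma$ in $\{t\geq\log a\}$ and that of $\Sigma_v$ in $\{t\leq-\log b\}$. Hence $\delta=\log a-(-\log b)=\log(ab)$, which is positive exactly when the horoballs are disjoint. Therefore $-\langle u,v\rangle=2ab=2e^{\delta}=2\lambda(\Sigma_u,\Sigma_v)^2$.

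The constant is $2$, not the $\tfrac12$ you asserted at the start of Step 2. This does not affect whether $\det A=0$, but it should be consistent. It agrees with the paper's normalisation $\Sigma_i=\{\langle x,v_i\rangle=-1/\sqrt2\}$, for which $\lambda(\Sigma_i,\Sigma_j)^2=-\langle v_i,v_j\rangle$.

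A quick sanity check would have caught the error. Enlarging $u$ shrinks its horoball, which must increase $\delta$, and $-\langle u,v\rangle$ does increase with $u$; so the two sides must be related by $e^{\delta}$, not $e^{-\delta}$. With this correction, the rest of your argument is sound.
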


To recover Penner's original theorem from Theorem~\ref{theoremA} we consider the case $n=3$ (\emph{not} $n=2$, with this formulation). Let $S_1$, $S_2$, $S_3$, and $S_4$ be four horocycles in $H^2$. We can embed $H^2$ as a hyperplane $\Pi$ in $H^3$. Let $\Sigma_i$ be the horosphere in $H^3$ that intersects $\Pi$ orthogonally in $S_i$, for $i=1,2,3,4$. Then the centres of $\Sigma_i$ lie on the ideal boundary of $\Pi$, so $\det A=0$, by Theorem~\ref{theoremA}. Writing $\lambda_{ij}=\lambda(\Sigma_i,\Sigma_j)$, we have that $\det A=-\Lambda (\lambda_{14} \lambda_{23} + \lambda_{13} \lambda_{24} + \lambda_{12} \lambda_{34})$, where
\[
\Lambda =(\lambda_{14} \lambda_{23} + \lambda_{13} \lambda_{24} - \lambda_{12} \lambda_{34})(\lambda_{14} \lambda_{23} - \lambda_{13} \lambda_{24} + \lambda_{12} \lambda_{34})(-\lambda_{14} \lambda_{23} + \lambda_{13}\lambda_{24} + \lambda_{12} \lambda_{34}).
\] 
This gives us Penner's theorem in $H^2$, illustrated in Figure~\ref{figure1}, which says that for any four horocycles $S_1$, $S_2$, $S_3$, and $S_4$ in $H^2$ we have one of $\lambda_{12} \lambda_{34}=\lambda_{14} \lambda_{23} + \lambda_{13} \lambda_{24}$, $\lambda_{13} \lambda_{24}=\lambda_{14} \lambda_{23} + \lambda_{12} \lambda_{34}$, or $\lambda_{14} \lambda_{23}=\lambda_{13}\lambda_{24}+\lambda_{12} \lambda_{34}$. By following this same reasoning with $n=4$ rather than $n=3$ we can obtain \cite[Theorem~4.18]{FeKaSeTu2023}.

\begin{figure}[ht]
\centering
\begin{tikzpicture}[scale=2.5]


  \IdealPointXY{A}{-0.6}{ 0.8}
  \IdealPointXY{B}{ 0.8}{ 0.6}
  \IdealPointAngle{C}{0}
  \IdealPointXY{D}{-0.8}{-0.6}


  \SetHorocycleRadius{A}{0.25}
  \SetHorocycleRadius{B}{0.15}
  \SetHorocycleRadius{C}{0.20}
  \SetHorocycleRadius{D}{0.18}


  \begin{scope}
    \clip (0,0) circle[radius=1];

    \DrawHorocycle{A}
    \DrawHorocycle{B}
    \DrawHorocycle{C}
    \DrawHorocycle{D}

    \Geodesic[above]{A}{B}{$\delta_{12}$}
    \Geodesic[right,xshift=-1pt,yshift=1pt]{B}{C}{${\delta}_{23}$}
    \Geodesic[below]{C}{D}{$\delta_{34}$}
    \Geodesic[left]{A}{D}{$\delta_{14}$}
    \Geodesic[above,xshift=-3pt,yshift=2pt]{A}{C}{$\delta_{13}$}
    \Geodesic[above left,yshift=-4pt]{B}{D}{$\delta_{24}$}
  \end{scope}

  \draw[thick] (0,0) circle[radius=1];

  \IdealLabel[above left]{A}{$S_1$}
  \IdealLabel[above right]{B}{$S_2$}
  \IdealLabel[right]{C}{$S_3$}
  \IdealLabel[below left]{D}{$S_4$}

\end{tikzpicture}
\caption{Penner's theorem $\lambda_{13} \lambda_{24}=\lambda_{14} \lambda_{23} + \lambda_{12} \lambda_{34}$, where $\lambda_{ij}=e^{\delta_{ij}/2}$}
\label{figure1}
\end{figure}

Next we offer two results on Ptolemy's theorem for hyperbolic space, Theorems~\ref{theoremB1} and~\ref{theoremB2}, for collections of $n+1$ and $n+2$ points in $H^n$, respectively. The second theorem is known and was established by Valentine and Andalafte in \cite[Theorem~4.7]{VaAn1971}; the first theorem is similar to Theorem~3.1 from that same work. The proofs we give (in Section~\ref{ptolemy}) using Lorentzian space are far shorter than those of \cite{VaAn1971}. Valentine and Andalafte's work deserves to be better known because many versions of Ptolemy's theorem that have followed since can be derived from their results.

\begin{maintheorem}{B1}[Ptolemy's theorem 1]\label{theoremB1}
Let $v_1,v_2,\dots,v_{n+1}$ be points in $H^n$ that belong to a common horosphere or hypersphere. Then $v_1,v_2,\dots,v_{n+1}$  lie on a common hyperplane if and only if $\det B=0$, where $B_{ij}=\sinh^2\tfrac12\varrho(v_i,v_j)$.
\end{maintheorem}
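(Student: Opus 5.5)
We work in the Lorentzian model. Take $\mathbb{R}^{n,1}$ with the form $\langle x,y\rangle = x_1y_1+\dots+x_ny_n - x_{n+1}y_{n+1}$, and let $H^n=\{x:\langle x,x\rangle=-1,\ x_{n+1}>0\}$, with $\cosh\varrho(x,y)=-\langle x,y\rangle$. The key identity is
\[
\langle v_i-v_j,v_i-v_j\rangle=-2-2\langle v_i,v_j\rangle=2(\cosh\varrho(v_i,v_j)-1)=4\sinh^2\tfrac12\varrho(v_i,v_j).
\]
Hence $B_{ij}=-\tfrac12\langle v_i,v_j\rangle-\tfrac12$. Let $J$ be the all-ones matrix and $G$ the Gram matrix $G_{ij}=\langle v_i,v_j\rangle$. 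Then $B=-\tfrac12(G+J)$, which is the Gram matrix (up to the factor $-\tfrac12$) of the $n+1$ vectors $v_i$ in the augmented space $\mathbb{R}^{n,1}\oplus\mathbb{R}$, where the extra coordinate is $1$ and carries a positive sign. Equivalently, $G+J$ is the Gram matrix of the vectors $\hat v_i=(v_i,1)$ in $\mathbb{R}^{n+1,1}$. This ambient space has dimension $n+2$, so the $n+1$ vectors do not automatically span it, and $\det B$ measures their linear dependence only when the restricted form is nondegenerate.

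The plan is therefore the following.

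First, encode the hypothesis. The points lie on a common horosphere or hypersphere exactly when there is a vector $w\neq 0$ and a constant $c$ with $\langle v_i,w\rangle=c$ for all $i$. Here $w$ is lightlike for a horosphere and spacelike for a hypersphere (an equidistant hypersurface, with $c\neq0$). A genuine sphere would correspond to timelike $w$; the hypothesis excludes it, and this is where the hypothesis enters. The points lie on a common hyperplane exactly when such a relation holds with $c=0$ and $w$ spacelike, which is the same as saying the $v_i$ are linearly dependent in $\mathbb{R}^{n,1}$. Indeed, $n+1$ points of $H^n$ span a subspace of dimension at most $n$ precisely when they lie in a linear hyperplane meeting $H^n$.

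Second, relate $\det B$ to linear dependence. Since $\det B=(-\tfrac12)^{n+1}\det(G+J)$, and $G+J$ is the Gram matrix of the $\hat v_i$, we have $\det B=0$ if and only if there is $a\neq 0$ in $\mathbb{R}^{n+1}$ with $\sum_j a_j(\langle v_i,v_j\rangle+1)=0$ for all $i$. Put $u=\sum a_jv_j$ and $s=\sum a_j$. The condition becomes $\langle v_i,u\rangle=-s$ for all $i$.

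For the forward direction, suppose the points lie in a hyperplane. Then the $v_i$ are dependent, so some $a\neq0$ gives $u=0$. We still need $s=0$. This fails in general, so I would instead use the hypersurface relation. The vectors $v_i$ satisfy both $\langle v_i,w\rangle=c$ and lie in a hyperplane $w_0^\perp$. By the coplanarity they span a space of dimension at most $n$. Choose $a\neq 0$ with $\sum a_jv_j=0$. Pairing with $w$ gives $cs=0$. If $c\neq0$ then $s=0$, and $a$ lies in the kernel of $G+J$. If $c=0$ the hypersurface is itself a hyperplane, and one adjusts the choice (or argues by a small perturbation and continuity).

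For the converse, suppose $\det B=0$, giving $a\neq0$ with $\langle v_i,u\rangle=-s$ for all $i$. Pairing with $a$ gives $\langle u,u\rangle=-s^2$, so $u$ is timelike or lightlike, or $u=0$.
\begin{itemize}
\item If $u=0$ with $a\neq0$, the $v_i$ are dependent and we are done.
\item If $u\neq0$ and $s\neq 0$, then all $v_i$ lie on $\{x:\langle x,u\rangle=-s\}$ with $u$ timelike, which is a genuine sphere. Combined with the horosphere or hypersphere relation $\langle v_i,w\rangle=c$, the points then lie on the intersection of two distinct such hypersurfaces. After subtracting a suitable multiple, they lie in an affine hyperplane of $\mathbb{R}^{n,1}$ not through the origin intersected with another. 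This forces the $v_i$ into a codimension-$2$ affine subspace, so they span at most $n$ dimensions linearly and are therefore dependent.
\item If $s=0$ and $u\neq0$, then $u$ is null and orthogonal to all $v_i$. The $v_i$ then lie in $u^\perp$, which is a degenerate hyperplane containing no timelike vectors other than multiples of $u$. This is a contradiction unless the $v_i$ are dependent.
\end{itemize}

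The main obstacle is the careful case analysis in the converse, together with the degenerate case $c=0$ in the forward direction. The cleanest route, which I would try first, is a dimension count. The hypersurface relation places the $\hat v_i$ in an $(n+1)$-dimensional subspace $W$ of $\mathbb{R}^{n+1,1}$, namely the vectors $(x,t)$ with $\langle x,w\rangle=ct$. On $W$ the restricted form is nondegenerate exactly when the normal vector $(w,-c)$ is non-null, that is, when $\langle w,w\rangle+c^2\neq0$. This is where horosphere and hypersphere differ from a sphere. With $W$ nondegenerate, $\det(G+J)=0$ holds if and only if the $\hat v_i$ are dependent, which in turn holds if and only if the $v_i$ are dependent, i.e.\ coplanar.
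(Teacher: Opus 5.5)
Your strategy is sound, but your handling of the hypothesis rests on a misreading, and as written the proposal does not prove the hypersphere case. In this paper a \emph{hypersphere} is a metric sphere $\{x\in\mathcal{H}^n:\varrho(x,p)=r\}$ with $r>0$, that is, $\langle x,w\rangle=c$ with $w=p$ timelike and $c=-\cosh r$. It is not an equidistant hypersurface.

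So the case you declare ``excluded by the hypothesis'' is one of the two cases the theorem is about. Your claim that spheres are where the form on $W$ degenerates is also false: here $\langle w,w\rangle+c^2=\sinh^2 r>0$, and for a horosphere it equals $c^2>0$. The only way to get $\langle w,w\rangle+c^2=0$ is a radius-$0$ ``sphere'' (a single point) or the empty set.

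The hypothesis really enters only through $c\neq0$. That is what makes linear dependence of the $v_i$ equivalent to dependence of the lifts $(v_i,1)$, i.e.\ to affine dependence of the $v_i$. For the same reason, drop your suggestion that $c=0$ can be handled by adjusting the choice or by perturbation. The statement is simply false on a hyperplane: for $n=2$, three distinct points on a geodesic give $\det B=2B_{12}B_{13}B_{23}\neq0$.

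Once you record $c=-\cosh r\neq0$ and $\langle w,w\rangle+c^2>0$ for hyperspheres, both your forward argument and your dimension count go through unchanged. In the converse, note that $\|u\|^2=-s^2$ with $s\neq0$ makes $\{x\in\mathcal{H}^n:\langle x,u\rangle=-s\}$ a single point, so all $v_i$ coincide. This replaces your ``two distinct hypersurfaces'' step, whose distinctness needs an argument once the given hypersurface may itself be a sphere. It also shows that the converse needs no hypothesis at all.

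With this repair, your route differs from the paper's. The paper stays in $\mathbb{R}^{n,1}$. It builds, separately for horospheres and hyperspheres, a vector $u$ with $\|u\|^2\neq\pm1$ and $\langle v_i,u\rangle=\tfrac12(\|u\|^2-1)$, so that the translates $v_i-u$ have Gram matrix $-2B$. It then applies Lemma~\ref{lemma99} and transfers normals back and forth with the explicit corrections $w-\lambda u$ and $w'-\mu u$, using the hypothesis in both directions.

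You instead lift to $(v_i,1)\in\mathbb{R}^{n+1,1}$, the device the paper uses for Theorem~\ref{theoremB2}, and observe that the hypersurface confines the lifts to the nondegenerate $(n+1)$-dimensional subspace $(w,-c)^\perp$. This has three advantages:
\begin{itemize}
\item it avoids constructing $u$ case by case;
\item it treats horospheres, hyperspheres and equidistant branches uniformly, recovering the paper's remark about totally umbilical hypersurfaces other than hyperplanes;
\item it pinpoints that only the forward direction uses the hypothesis.
\end{itemize}
The paper's version, in exchange, needs nothing beyond Lemma~\ref{lemma99} applied in the original space.
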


The case when $v_i$ belong to a common horosphere is equivalent to Euclidean versions of Ptolemy's theorem in several dimensions from \cite[Theorem~3]{Gr1996} and \cite[Theorem~3.4]{MaMa2019}. To see this, we recall that $(n-1)$-dimensional Euclidean space $E^{n-1}$ (with distance function $d$) can be realised as a horosphere in $H^n$ with induced Euclidean metric  $d(u,v)=2\sinh\tfrac12\varrho(u,v)$ for points $u$ and $v$ on this horosphere. Suppose that the points $v_i$ lie on this horosphere. Then $v_i$ lie on a common hyperplane in $H^n$ if and only if $v_i$ lie on a common Euclidean hypersphere or Euclidean hyperplane in $E^{n-1}$ -- and the Euclidean versions of Theorem~\ref{theoremB1} from \cites{Gr1996,MaMa2019} follow. Most of this geometric discussion can also be found at the end of \cite[Section~3]{VaAn1971}.

Suppose now that $n=3$. Then $\det B=-4^{-4}\Delta (d_{14} d_{23} + d_{13} d_{24} + d_{12} d_{34})$, where $d_{ij}=d(v_i,v_j)$ and
\[
\Delta= (d_{14} d_{23} + d_{13} d_{24} - d_{12} d_{34})(d_{14} d_{23} - d_{13} d_{24} + d_{12} d_{34})(-d_{14} d_{23} + d_{13}d_{24} + d_{12} d_{34}).
\] 
We obtain Ptolemy's original theorem in the plane (and its converse), illustrated in Figure~\ref{figure2}, which says that four points $v_1$, $v_2$, $v_3$, and $v_4$ in a Euclidean plane lie on a common circle or line if and only if $d_{12} d_{34}=d_{14} d_{23} + d_{13} d_{24}$, $d_{13} d_{24}=d_{14} d_{23} + d_{12} d_{34}$, or $d_{14} d_{23}=d_{13}d_{24}+d_{12} d_{34}$.

\begin{figure}[ht]
\centering
\begin{tikzpicture}[scale=2.3]
  \PtolemyVertex[right]{V1}{0}{$v_1$}
  \PtolemyVertex[above right]{V2}{60}{$v_2$}
  \PtolemyVertex[left]{V3}{180}{$v_3$}
  \PtolemyVertex[below]{V4}{260}{$v_4$}

  \DrawDiskBoundary

  \draw[ptolemy edge] (V1) -- (V2) -- (V3) -- (V4) -- cycle;
  \draw[ptolemy edge] (V1) -- (V3);
  \draw[ptolemy edge] (V2) -- (V4);

  \SegmentLabel[below=0pt, xshift=-3pt]{V1}{0.50}{V2}{$d_{12}$}
  \SegmentLabel[above=1pt, xshift=-2pt]{V2}{0.50}{V3}{$d_{23}$}
  \SegmentLabel[left=1pt, xshift=3pt,yshift=-1pt]{V3}{0.50}{V4}{$d_{34}$}
  \SegmentLabel[below=1pt, xshift=2pt]{V4}{0.50}{V1}{$d_{14}$}

  \SegmentLabel[above=1pt]{V1}{0.62}{V3}{$d_{13}$}
  \SegmentLabel[left=1pt]{V2}{0.64}{V4}{$d_{24}$}
\end{tikzpicture}
\caption{Ptolemy's theorem $d_{13} d_{24}=d_{14} d_{23} + d_{12} d_{34}$}
\label{figure2}
\end{figure}

The other case of Theorem~\ref{theoremB1}, when the points $v_i$ are assumed to lie on a (metric) hypersphere, gives us a spherical version of Ptolemy's theorem, which was obtained by Valentine \cite[Theorem~5.4]{Va1970b} for the case $n=3$. More generally, the hypothesis in Theorem~\ref{theoremB1} that the $v_i$ belong to a common horosphere or hypersphere  can be broadened to allow the $v_i$ to belong to any totally umbilical hypersurface that is not a hyperplane (as the proof demonstrates).

For our second version of Ptolemy's theorem in hyperbolic space, we refer to a branch of an equidistant hypersurface, which is one of the two connected components of the locus of points in $H^n$ equidistant from some hyperplane. 

\begin{maintheorem}{B2}[Ptolemy's theorem 2]\label{theoremB2}
Let $v_1,v_2,\dots,v_{n+2}$ be points in $H^n$. Then $v_1,v_2,\dots,v_{n+2}$  lie on a common horosphere, hypersphere,  hyperplane, or one branch of an equidistant hypersurface if and only if $\det B=0$, where $B_{ij}=\sinh^2\tfrac12\varrho(v_i,v_j)$.
\end{maintheorem}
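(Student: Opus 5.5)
Work in the hyperboloid model: $\mathbb{R}^{n,1}$ with Lorentzian form $\langle x,y\rangle=x_1y_1+\dots+x_ny_n-x_{n+1}y_{n+1}$ and $H^n=\{x:\langle x,x\rangle=-1,\ x_{n+1}>0\}$. For $u,v\in H^n$ we have $\cosh\varrho(u,v)=-\langle u,v\rangle$. Hence
\[
\langle u-v,u-v\rangle=-2-2\langle u,v\rangle=2\cosh\varrho-2=4\sinh^2\tfrac12\varrho(u,v),
\]
and also $B_{ij}=-\tfrac14\langle v_i-v_j,v_i-v_j\rangle=-\tfrac12(1+\langle v_i,v_j\rangle)$. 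Write $G$ for the Gram matrix $G_{ij}=\langle v_i,v_j\rangle$ and $J$ for the all-ones matrix. Then $B=-\tfrac12(G+J)$, which is the Gram matrix (up to the factor $-\tfrac12$) of the vectors $v_i$ with respect to the degenerate form
\[
\langle x,y\rangle'=\langle x,y\rangle+x_{0}y_{0}
\]
on $\mathbb{R}\oplus\mathbb{R}^{n,1}$, evaluated on the lifted vectors $\hat v_i=(1,v_i)$. So $\det B=0$ if and only if there is a nonzero $c\in\mathbb{R}^{n+2}$ with $(G+J)c=0$. Equivalently, the quadratic form $\langle\cdot,\cdot\rangle+(\text{first coordinate})^2$, which has signature $(n+1,1)$ and is nondegenerate on $\mathbb{R}^{n+2}$, has degenerate Gram matrix on the $n+2$ vectors $\hat v_i$. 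Since the form is nondegenerate, this happens exactly when the $\hat v_i$ are linearly dependent in $\mathbb{R}^{n+2}$.

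Linear dependence of the $\hat v_i$ means there is $c\neq 0$ with $\sum c_i v_i=0$ and $\sum c_i=0$. Equivalently, by dualising, there is a nonzero $(a,w)\in\mathbb{R}\times\mathbb{R}^{n,1}$ with $\langle w,v_i\rangle=a$ for all $i$. Indeed, the $n+2$ vectors $\hat v_i$ span a proper subspace if and only if some nonzero linear functional kills them all, and such a functional has the form $(t,x)\mapsto -at+\langle w,x\rangle$ with $(a,w)\neq 0$. Thus
\[
\det B=0\iff \text{all } v_i \text{ lie on a set } \{x\in H^n:\langle w,x\rangle=a\} \text{ with } (a,w)\neq(0,0).
\]
If $w=0$ then $a=0$ as well, so in fact $w\neq 0$.

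It remains to classify the sets $\{x\in H^n:\langle w,x\rangle=a\}$ with $w\neq0$ by the causal type of $w$:
\begin{itemize}
\item If $w$ is timelike, normalise $w$ to lie in $\pm H^n$. The set is $\{x:\cosh\varrho(x,w)=\mp a\}$, which is a hypersphere (or a point, or empty).
\item If $w$ is lightlike, the set $\langle w,x\rangle=a$ with $a<0$ (after choosing $w$ future-pointing) is a horosphere centred at $[w]$; when $a\ge 0$ it is empty.
\item If $w$ is spacelike with $\langle w,w\rangle=1$, then $\langle w,x\rangle=\sinh(\text{signed distance from } x \text{ to } w^\perp)$. For $a=0$ the set is a hyperplane, and for $a\neq0$ it is one branch of an equidistant hypersurface (the sign of $a$ picks the branch).
\end{itemize}
Conversely, each of these hypersurfaces arises this way, so containment of the $v_i$ in any of them gives the dual vector $(a,w)$, hence dependence of the $\hat v_i$, hence $\det B=0$.

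The one delicate point is the degenerate outcomes in the timelike case, where the set can be a single point. If all $n+2$ points coincide, they trivially lie on a hyperplane, so this case is harmless. Points that are not distinct are also harmless, since they certainly lie on a common hyperplane, and then $B$ has two equal rows. I expect the main step to verify carefully to be the equivalence between $\det B=0$ and linear dependence of the $\hat v_i$. This needs two facts, which I would check explicitly with an orthonormal basis:
\begin{itemize}
\item the form $\langle\cdot,\cdot\rangle'$ on $\mathbb{R}\oplus\mathbb{R}^{n,1}$ is nondegenerate;
\item the Gram matrix of $m$ vectors under a nondegenerate form on $\mathbb{R}^m$ with $m=n+2$ is $P^{T}QP$, so its determinant is $\det Q\,(\det P)^2$.
\end{itemize}
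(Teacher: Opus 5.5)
Your proof is correct and is essentially the paper's argument: you lift $v_i\mapsto(1,v_i)$ into $\mathbb{R}^{n+1,1}$ so that the Gram matrix is $-2B$, apply Lemma~\ref{lemma99}, and classify $\{x:\langle w,x\rangle=a\}$ by the causal type of $w$. The only difference is that the paper rules out the one-point level set by noting that the normal to a subspace containing two independent lightlike vectors is spacelike, while you deal with that case directly.

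One aside is false, though your argument does not use it: if only some of the points coincide, they need not lie on a common hyperplane, since $n+1$ distinct points in general position do not. Your main argument already covers this case, because the level set then contains two distinct points and is therefore a genuine hypersphere, horosphere, hyperplane, or equidistant branch.
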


With a little more effort (which we choose not to expend) it is possible to state Theorems~\ref{theoremB1} and~\ref{theoremB2} in a single unified form in Lorentzian space.

Our fourth theorem is a (new) multidimensional version of Casey's theorem and its converse, stated in hyperbolic space. Each hyperplane in $H^n$ has two unit normal vector fields, one the negative of the other; we define a \emph{cooriented hyperplane} in $H^n$ to be a hyperplane $\Pi$ with a choice of one of these two vector fields. For simplicity we denote a cooriented hyperplane by  the same notation $\Pi$ that we use for the underlying hyperplane itself. Now, let $\Pi$ and $\Pi'$ be two cooriented hyperplanes in $H^n$. When they are disjoint and not tangent at infinity, let $\gamma$ denote the common perpendicular to both hyperplanes, and when they intersect, let $\theta$ denote the angle between two normals at a point of intersection. We define
\[
\sigma(\Pi,\Pi') = 
\begin{cases}
\phantom{-}\sinh^2\tfrac12\varrho(\Pi,\Pi') & \text{if $\Pi\cap\Pi'=\varnothing$ and normal coorientations coincide along $\gamma$,}\\
-\cosh^2\tfrac12\varrho(\Pi,\Pi') & \text{if $\Pi\cap\Pi'=\varnothing$ and normal coorientations are opposite along $\gamma$,}\\
-\sin^2\tfrac12\theta & \text{if $\Pi\cap\Pi'\neq \varnothing$.}
\end{cases}
\]
When $\Pi$ and $\Pi'$ are tangent at $\infty$, the value of $\sigma(\Pi,\Pi')$ is 0 if the normal coorientations coincide along a common perpendicular horosphere and $-1$ if the normal coorientations are opposite along a common perpendicular horosphere. The somewhat opaque formula for $\sigma$ simplifies dramatically in Lorentzian space $\mathbb{R}^{n,1}$; this will be explained in Section~\ref{section4}, where we will see that $\sigma(\Pi,\Pi')$ is simply $\tfrac12(\langle v,v'\rangle-1)$, where $v$ and $v'$ are Lorentzian spacelike unit normals to $\Pi$ and $\Pi'$.

\begin{maintheorem}{C}[Casey's theorem]\label{theoremC}
Let $\Pi_1, \Pi_2, \dots, \Pi_{n+1}$ be hyperplanes in $H^n$. These hyperplanes can be endowed with coorientations such that $\det C=0$, where  $C_{ij} = \sigma(\Pi_i,\Pi_j)$, if and only if the hyperplanes 
\begin{enumerate}
    \item share a common tangent hyperplane at infinity,
    \item share a common ideal point, or
    \item are all orthogonal to one hyperplane and equally inclined to another.
\end{enumerate}
\end{maintheorem}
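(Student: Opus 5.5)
Work in the hyperboloid model of $H^n$ inside $\mathbb{R}^{n,1}$. A cooriented hyperplane $\Pi_i$ corresponds to a spacelike unit vector $v_i$, with $\langle v_i,v_i\rangle=1$, and changing the coorientation replaces $v_i$ by $-v_i$. As stated after the definition of $\sigma$ (and to be checked in Section~\ref{section4}), $C_{ij}=\tfrac12(\langle v_i,v_j\rangle-1)$. Let $G$ be the Gram matrix $G_{ij}=\langle v_i,v_j\rangle$ and $J$ the all-ones matrix, so $2C=G-J$.

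\textbf{Step 1: the determinant criterion.} Since $J=\mathbf{1}\mathbf{1}^T$, the matrix determinant lemma gives $\det(G-J)=\det G-\mathbf{1}^T\operatorname{adj}(G)\mathbf{1}$. I would rather use the Lorentzian reformulation, which is presumably the paper's common Gram-matrix lemma. Write $w_i=(v_i,1)$ in $\mathbb{R}^{n,1}\oplus\mathbb{R}$ with the form $\langle\cdot,\cdot\rangle-t^2$. Then $G-J$ is the Gram matrix of $w_1,\dots,w_{n+1}$, which are $n+1$ vectors in an $(n+2)$-dimensional space. The plan is to show that $\det(G-J)=0$ exactly when there is a nonzero $a$ with $(G-J)a=0$. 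Setting $u=\sum a_iv_i$ and $s=\sum a_i$, this says $\langle u,v_j\rangle=s$ for all $j$. Equivalently, there is a vector $u\in\mathbb{R}^{n,1}$ and a scalar $s$, not both zero, with $\langle u,v_j\rangle=s$ for every $j$, where $u$ lies in the span of the $v_j$. The span restriction can be dropped when the $v_j$ span, and handled separately otherwise. So the condition becomes: after choosing signs, all the $v_j$ lie on a common affine hyperplane $\{x:\langle u,x\rangle=s\}$, or the Gram matrix is degenerate with $s=0$.

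\textbf{Step 2: case split on $u$ (the converse direction).}
\begin{itemize}
\item If $s=0$, then $u\ne0$ and $u$ is orthogonal to all $v_j$. Depending on whether $u$ is timelike, lightlike or spacelike, this means all $\Pi_j$ pass through a common point, a common ideal point, or are orthogonal to a common hyperplane. I will need to fold these into (i)--(iii), perhaps with degenerate readings.
\item If $s\ne0$, rescale so that $\langle u,v_j\rangle=s$ for all $j$.
 \begin{itemize}
 \item If $u$ is lightlike, the constant inner product with an ideal point means every $\Pi_j$ has the same signed distance to a fixed horosphere centred at $[u]$. That should translate to all $\Pi_j$ being tangent to a common horosphere, which is (i).
 \item If $u$ is timelike, all $\Pi_j$ are at equal signed distance from a point, hence tangent to a common sphere. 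This is presumably what "common tangent hyperplane at infinity" or a degenerate case must absorb. I need to decide how the statement intends this; it may be that spheres count under (i) in the conformal boundary picture.
 \item If $u$ is spacelike, all $\Pi_j$ meet the hyperplane $u^\perp$ at equal angles or equal distance, which is "equally inclined to another".
 \end{itemize}
 The orthogonality part of (iii) comes from the span/rank condition: if the $v_j$ do not span, there is a common orthogonal vector.
\end{itemize}

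\textbf{Step 3: forward direction.} Each geometric configuration supplies $u$ and $s$, after choosing coorientations consistently (for example, all normals pointing away from the common horosphere). This gives a nonzero kernel vector of $G-J$, since $n+1$ vectors $v_j$ in an affine hyperplane of $\mathbb{R}^{n,1}$, augmented by the constant $1$, are linearly dependent in the $(n+2)$-dimensional space.

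\textbf{Main obstacle.} The hardest part will be matching the algebraic trichotomy exactly to the three stated alternatives (i)--(iii), including the degenerate cases: $s=0$, $u$ timelike, and the $v_j$ failing to span. This requires careful bookkeeping of when $n+1$ vectors lying on an affine quadric slice force dependence of the $w_i$. I would first settle the dimension count, namely that $\det(G-J)=0$ iff the $w_i$ are linearly dependent or span a degenerate subspace, and then interpret each subcase geometrically.
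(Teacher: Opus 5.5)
Your setup is the right starting point and matches the paper's converse: the augmented vectors $w_i=(v_i,1)$ have Gram matrix $G-J$, and a kernel vector $a$ gives $u=\sum a_iv_i$, $s=\sum a_i$ with $\langle u,v_j\rangle=s$. But you miss the one identity that makes the case analysis work. Because $u$ is built from the $v_j$ with the same coefficients $a$, you have $\|u\|^2=\sum_j a_j\langle v_j,u\rangle=s\sum_j a_j=s^2$. Equivalently, $(u,s)=\sum_j a_jw_j$ is orthogonal to every $w_j$, hence null. Your plan to ``drop the span restriction'' throws away exactly this information. With it, the trichotomy is forced:
\begin{itemize}
\item $u$ is never timelike;
\item if $u$ is lightlike then $s=0$, and $[u]$ is a common ideal point (ii);
\item if $u$ is spacelike then after normalising $s=\pm1$, so $|\langle u,v_j\rangle|=1$ and every $\Pi_j$ is tangent at infinity to $u^\perp$. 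This is (i), not ``equally inclined''.
\end{itemize}
So your Step~2 consists largely of cases that cannot occur: timelike $u$, lightlike $u$ with $s\neq0$, and $s=0$ with $u\neq 0$ timelike or spacelike. The configurations you hope to fold into (i)--(iii) genuinely cannot be folded in. For example, three lines through a common point of $H^2$ with distinct directions have $\det C\neq0$ for every choice of coorientations.

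Second, your claim that $s=0$ forces $u\neq0$ is false. A nonzero $a$ allows $u=0=s$, that is, $\sum_ja_jv_j=0$ with $\sum_ja_j=0$, which happens when the $w_j$ themselves are linearly dependent. This is precisely the case from which (iii) arises, and it needs an argument that is absent from your plan. The span $V$ of the $v_i-v_{n+1}$ has dimension at most $n-1$. So $V^\perp$ contains a nonzero $u'$ orthogonal to every $v_j$, and an independent $w$ with $\langle v_j,w\rangle=\alpha$ constant. One then splits on the type of $u'$:
\begin{itemize}
\item lightlike $u'$ gives (ii);
\item spacelike $u'$ gives (iii), after replacing $w$ by a suitable unit multiple of $w+tu'$;
\item timelike $u'$ gives (i) or (iii), after projecting $w$ to $y\in(u')^\perp$ and, when $\alpha\neq0,\pm1$, using $z=\alpha^{-1}y+\sqrt{\alpha^{-2}-1}\,u'$.
\end{itemize}

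Finally, your forward direction rests on a false claim. Points of a single affine hyperplane of $\mathbb{R}^{n,1}$ are generically affinely independent, so the $w_j$ need not be dependent. For (i) and (ii), what works is that the relevant $(u,s)$ is null: a unit normal with $s=1$, or a lightlike $u$ with $s=0$. Then the $w_j$ lie in a degenerate hyperplane on which every Gram matrix is singular; this is what the paper's substitution $w_i=v_i-v\in v^\perp$ achieves. For (iii) you need both conditions $\langle v_j,u\rangle=0$ and $\langle v_j,v\rangle=\lambda$. Together they confine the $v_j$ to an $(n-1)$-dimensional affine subspace, which makes them affinely dependent. The paper instead splits $C=\tfrac12(X+Y)$ with $\operatorname{rank}X\leq n-1$ and $\operatorname{rank}Y\leq1$.
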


For clarity, statement (i) asserts that there is a hyperplane $\Pi$ whose ideal boundary is tangent (or equal) to the ideal boundary of each hyperplane $\Pi_i$. The assertion of statement (iii) that the hyperplanes are equally inclined is meant in the unoriented sense. These three statements have not (to our knowledge) previously appeared as a trio in any statement of Casey's theorem.  

To recover a version of Casey's theorem in $E^n$ from Theorem~\ref{theoremC}, we use the upper half-space model $\mathbb{H}^{n+1}=\{z+it:z\in\mathbb{R}^n,t>0\}$ of $H^{n+1}$ with Riemannian metric $|d\zeta|/t$, where $\zeta=z+it$. In this model the ideal boundaries of (hyperbolic) hyperplanes are (Euclidean) hyperspheres or extended hyperplanes in the one-point extension of $\mathbb{R}^n$. We define a \emph{cooriented hypersphere} (in $E^n$) to be a hypersphere $S$ with a sign $\varepsilon\in\{\pm 1\}$ (where $\varepsilon=1$ for the outward unit normal vector field and $\varepsilon=-1$ for the inward alternative). 

Consider cooriented hyperspheres $S_i$, for $i=1,2,\dots, n+2$, with radii $r_i$, centres $c_i$, and coorientations $\varepsilon_i$, and let $\Pi_i$ be the cooriented hyperplane with ideal boundary $S_i$ cooriented to agree with $S_i$. We define
\[
\tau(S_i,S_j) = \varepsilon_i\varepsilon_j(|c_i-c_j|^2-(r_i-\varepsilon_i\varepsilon_j r_j)^2).
\]
We recall that the signed inversive distance $\iota(S_1,S_2)=\varepsilon_i\varepsilon_j(|c_i-c_j|^2-r_i^2-r_j^2)/(2r_ir_j)$ between $S_i$ and $S_j$ equals $\pm\cosh \varrho (\Pi_i,\Pi_j)$ if the hyperplanes are disjoint (with $+$ for opposite coorientations and $-$ for coincident orientations) and $\cos\phi$ if they intersect, where $\phi$ is the angle between the normals at a point of intersection. From this one can check that 
\[
\tau(S_i,S_j)=2r_ir_j(\iota(S_1,S_2)+1)=-4r_ir_j\sigma(\Pi_i,\Pi_j).
\]
Hence, with $D_{ij}=\tau(S_i,S_j)$ and $R=\operatorname{diag}(r_1,r_2,\dots,r_{n+2})$, we have $D=-4RCR$. Therefore $\det C=0$ if and only if $\det D=0$, so we have the following corollary of Theorem~\ref{theoremC}.

\newtheorem*{corollaryD}{Corollary D}
\begin{corollaryD}[Casey's theorem in Euclidean space]
Let $S_1,S_2,\dots,S_{n+2}$ be hyperspheres in $E^n$. These hyperspheres can be endowed with coorientations such that $\det D=0$, where $D_{ij} = \tau(S_i,S_j)$, if and only if the hyperspheres 
\begin{enumerate}
    \item share a common tangent hypersphere or hyperplane,
    \item share a common intersection point, or
    \item are all orthogonal to one hypersphere or hyperplane and equally inclined to another.
\end{enumerate}
\end{corollaryD}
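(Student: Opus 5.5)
The plan is to deduce Corollary~D from Theorem~\ref{theoremC}, applied in $H^{n+1}$ (the upper half-space model $\mathbb{H}^{n+1}$), with the $n+2$ hyperspheres $S_i$ playing the role of the $(n+1)+1$ hyperplanes required there. Each $S_i$ is the ideal boundary of a unique hyperbolic hyperplane $\Pi_i$. A choice of coorientation $\varepsilon_i$ on $S_i$ corresponds bijectively to a choice of coorientation on $\Pi_i$. So ``the $S_i$ can be endowed with coorientations'' matches ``the $\Pi_i$ can be endowed with coorientations'' in Theorem~\ref{theoremC}.

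First I will verify the identity $\tau(S_i,S_j)=-4r_ir_j\sigma(\Pi_i,\Pi_j)$ case by case from the definition of $\sigma$ and the inversive distance formula.
\begin{itemize}
\item Disjoint hyperplanes with opposite coorientations: $\iota=\cosh\varrho$, so $\iota+1=2\cosh^2\tfrac12\varrho=-2\sigma$.
\item Disjoint hyperplanes with coincident coorientations: $\iota=-\cosh\varrho$, so $\iota+1=-2\sinh^2\tfrac12\varrho=-2\sigma$.
\item Intersecting hyperplanes: $\iota+1=\cos\phi+1$. This requires checking the convention linking $\phi$ with $\theta$, namely that $\iota+1=2\sin^2\tfrac12\theta$. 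The check is a one-line computation with the two radii and the distance between centres.
\item Tangent hyperplanes: these follow by continuity.
\end{itemize}
The diagonal entries also agree: $\tau(S_i,S_i)=0=\sigma(\Pi_i,\Pi_i)$. Hence $D=-4RCR$ with $R$ invertible, since all $r_i>0$, and so $\det D=0\iff\det C=0$ for the corresponding coorientations.

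Next I will translate the three alternatives of Theorem~\ref{theoremC} into Euclidean language in $\mathbb{R}^n\cup\{\infty\}$, using the fact that hyperbolic hyperplanes in $\mathbb{H}^{n+1}$ correspond to hyperspheres or extended hyperplanes in the boundary.
\begin{enumerate}
\item A hyperbolic hyperplane $\Pi$ whose ideal boundary is tangent to (or equal to) each $S_i$ becomes a common tangent hypersphere or hyperplane.
\item A common ideal point of the $\Pi_i$ is a common point of the $S_i$. If this point is $\infty$, the $S_i$ would all be hyperplanes, which cannot happen since they are hyperspheres; so the point lies in $\mathbb{R}^n$.
\item Orthogonality of hyperbolic hyperplanes is equivalent to orthogonality of their boundary spheres, since Poincar\'e extension is conformal and the dihedral angle equals the angle between the boundary spheres. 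Equal inclination transfers in the same way.
\end{enumerate}

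The main obstacle is bookkeeping rather than ideas: getting the sign and angle conventions in the intersecting case right, so that $\cos\phi+1$ really equals $2\sin^2\tfrac12\theta$ with $\theta$ the angle between the normals as defined in $\sigma$. I also need to ensure that the tangent-at-infinity cases of $\sigma$ (values $0$ and $-1$) agree with $\tau$. I would settle these by a direct computation in the upper half-space: place the centres on a line and compute the hyperbolic distance or angle between the hemispheres explicitly.
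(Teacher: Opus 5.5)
Your proposal is correct and follows essentially the paper's own route: apply Theorem~C in $H^{n+1}$ to the $n+2$ hyperplanes $\Pi_i$ bounded by the $S_i$, use $\tau(S_i,S_j)=-4r_ir_j\sigma(\Pi_i,\Pi_j)$ to get $D=-4RCR$ with $R$ invertible for every choice of coorientations, and read off the three alternatives on the ideal boundary. The convention check you flag does work out: a direct computation gives $\iota(S_i,S_j)=-\cos\theta$, with $\theta$ the angle between the coorientation normals (so the $\phi$ in the recalled formula $\iota=\cos\phi$ must be read as $\pi-\theta$), and hence $\iota+1=2\sin^2\tfrac12\theta=-2\sigma$ as required.
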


Corollary~D is apparently new, even in the case $n=2$; see \cite[Theorem~4.1]{AbAs2018} and \cite[Theorems~2.2--2.4]{MaMa2019} for related results. The three cases of Corollary~D are illustrated in Figure~\ref{figure3} for $n=2$. In the first one the interiors of any pair of circles $S_i$ and $S_j$ are disjoint, and we define 
\[
t_{ij}=\sqrt{|c_i-c_j|^2-(r_i-\varepsilon_i\varepsilon_jr_j)^2}.
\]
This is the length of the exterior tangent between $S_i$ and $S_j$ if $\varepsilon_i\varepsilon_j=1$ and the length of the interior tangent between $S_i$ and $S_j$ if $\varepsilon_i\varepsilon_j=-1$. Notice that $t_{ij}^2=\varepsilon_i\varepsilon_j\tau(S_i,S_j)$. Let $T$ be the four-by-four matrix with entries $t_{ij}^2$; then  $\det T=\det D$. When all $S_i$ are cooriented in the same way and have the cyclic labelling of Figure~\ref{figure3}(i), the equation $\det T=0$ implies that $t_{13}t_{24}= t_{12}t_{34}+t_{14} t_{23}$; we have thereby recovered the original statement of Casey's theorem.

\begin{figure}[ht]
\begin{subfigure}[b]{0.3\textwidth}
\centering
\resizebox{\linewidth}{!}{%
\begin{tikzpicture}[scale=3]
  \IdealPointAngle{H1}{175}
  \IdealPointAngle{H2}{65}
  \IdealPointAngle{H3}{0}
  \IdealPointAngle{H4}{250}

  \SetHorocycleRadius{H1}{0.35}
  \SetHorocycleRadius{H2}{0.17}
  \SetHorocycleRadius{H3}{0.24}
  \SetHorocycleRadius{H4}{0.23}

  \begin{scope}
    \clip (0,0) circle[radius=1];

    \LabelledHorocycle{H1}{$S_1$}
    \LabelledHorocycle{H2}{$S_2$}
    \LabelledHorocycle{H3}{$S_3$}
    \LabelledHorocycle[yshift=-1pt]{H4}{$S_4$}

    \HorocycleTangent[below, xshift=1pt]{H1}{H2}{ 1}{$t_{12}$}
    \HorocycleTangent[above,xshift=5pt,yshift=-1pt]{H1}{H3}{ 1}{$t_{13}$}
    \HorocycleTangent[right, xshift=0pt]{H1}{H4}{-1}{$t_{14}$}
    \HorocycleTangent[left, xshift=1pt]{H2}{H3}{ 1}{$t_{23}$}
    \HorocycleTangent[right, xshift=-1pt]{H2}{H4}{-1}{$t_{24}$}
    \HorocycleTangent[above, xshift=-2pt]{H3}{H4}{ 1}{$t_{34}$}
  \end{scope}

  \DrawDiskBoundary
\end{tikzpicture}}
\caption{Common tangent circle}
\end{subfigure}
\hfill
\begin{subfigure}[b]{0.3\textwidth}
\centering
\resizebox{\linewidth}{!}{%
\begin{tikzpicture}[scale=2]
  \coordinate (P) at (0,0);

  \CircleThroughPoint{P}{K1}{180}{0.95}{$S_1$}
  \CircleThroughPoint{P}{K2}{ 95}{0.72}{$S_2$}
  \CircleThroughPoint{P}{K3}{  0}{0.54}{$S_3$}
  \CircleThroughPoint[yshift=-1pt]{P}{K4}{285}{0.36}{$S_4$}

  \filldraw[common point] (P) circle[radius=0.7pt];
\end{tikzpicture}}
\caption{Common intersection point}
\end{subfigure}
\hfill
\begin{subfigure}[b]{0.3\textwidth}
\centering
\resizebox{\linewidth}{!}{%
\begin{tikzpicture}[scale=2.6]
  \coordinate (O) at (0,0);
  \def\CaseThreeOrthogonalLineAngle{0}
  \def\CaseThreeEqualAngleLineAngle{58}
  \def\CaseThreeCircleRatio{1.28} 

  \NestedEqualAngleCircle{O}{N1}{\CaseThreeOrthogonalLineAngle}{0.88}{50}{$S_1$}
  \NestedEqualAngleCircle{O}{N2}{\CaseThreeOrthogonalLineAngle}{0.66}{50}{$S_2$}
  \NestedEqualAngleCircle{O}{N3}{\CaseThreeOrthogonalLineAngle}{0.47}{50}{$S_3$}
  \NestedEqualAngleCircle{O}{N4}{\CaseThreeOrthogonalLineAngle}{0.30}{50}{$S_4$}

  \LineThroughPointRange{O}{\CaseThreeOrthogonalLineAngle}{0.50}{2.18}
  \LineThroughPointRange{O}{\CaseThreeEqualAngleLineAngle}{0.65}{1.65}
\end{tikzpicture}}
\caption{Common angles}
\end{subfigure}  
\caption{Three cases of Casey's theorem}
\label{figure3}
\end{figure}

By identifying the $n$-sphere  with the ideal boundary of $H^{n+1}$, and using the correspondence between spherical hyperspheres on the $n$-sphere and ideal boundaries of hyperplanes in $H^{n+1}$, Theorem~\ref{theoremC} gives a spherical version of Casey's theorem, including its converse. Likewise, after embedding $H^n$ as a hyperplane in $H^{n+1}$, stereographic projection from this copy of $H^n$ to the ideal boundary of $H^{n+1}$ sends hyperbolic hyperspheres to spherical hyperspheres on the ideal boundary; applying Theorem~\ref{theoremC} then gives a hyperbolic version of Casey's theorem for \emph{hyperspheres} rather than hyperplanes, again with converse. In dimension two, the hyperbolic and spherical Casey formulae of \cite{AbMi2015} are obtained as the forward, common-tangent-circle special cases of these higher-dimensional statements.

\subsubsection*{Acknowledgements}

The first author was supported by an EPSRC DTP studentship and the second was supported by EPSRC grant EP/W002817/1. Both authors thank Katie Waddle for constructive suggestions for improving the manuscript. We acknowledge the ICMS workshop \emph{Farey's legacy in frieze patterns and discrete geometry} (20--24 April 2026) during which some of this work was developed.

\section{Penner's theorem}

We prove all our results using the Lorentzian model of hyperbolic space; see \cite{Pe2006} and \cite[Chapter 3]{Ra2019} for background. Let $\mathbb{R}^{n,1}$ denote  $\mathbb{R}^{n+1}$ with the inner product
\[
\langle x,y\rangle = x_1y_1+x_2y_2+\dots+x_ny_n-x_{n+1}y_{n+1},
\]
and let $\|x\|^2=\langle x,x\rangle$. This is Lorentzian space of signature $(n,1)$. A vector $x$ is called \emph{spacelike} if $\|x\|^2>0$, \emph{timelike} if $\|x\|^2<0$, and \emph{lightlike} if $\|x\|^2=0$. We define $\mathcal{H}^n=\{x\in\mathbb{R}^{n,1}:\|x\|^2=-1,x_{n+1}>0\}$. This is a model of $n$-dimensional hyperbolic space with distance function $\varrho$, where $\cosh \varrho(x,y)= - \langle x,y\rangle$. The \emph{ideal boundary} of $H^n$ is the set of lines of lightlike vectors. 

A \emph{hyperplane} in $\mathcal{H}^n$ is the nonempty intersection of $\mathcal{H}^n$ with a codimension 1 subspace of $\mathbb{R}^{n,1}$.  A \emph{hypersphere} in $\mathcal{H}^n$ is a set $\{x\in \mathcal{H}^n: \varrho(x,c)=r\}$, for some $c\in \mathcal{H}^n$ and $r>0$. A \emph{horosphere} $\Sigma$ in $\mathcal{H}^n$ is a set $\Sigma=\{x\in\mathcal{H}^n: \langle x,v\rangle=-1/\sqrt2\}$, where $v$ is a lightlike vector in $\mathbb{R}^{n,1}$. The line through $v$ is called the \emph{centre} of $\Sigma$.
 
The next elementary lemma, which involves a variation on a familiar calculation with Gram matrices, is central to proving all four main theorems. 

\begin{lemma}\label{lemma99}
The vectors $v_1,v_2,\dots,v_{n+1}\in \mathbb{R}^{n,1}$ belong to a codimension 1 subspace of $\mathbb{R}^{n,1}$ if and only if $\det X=0$, where $X_{ij}=\langle v_i,v_j\rangle$.
\end{lemma}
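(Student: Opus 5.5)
The plan is to write the Gram matrix as a matrix product and use the nondegeneracy of the Lorentzian form. Let $J=\operatorname{diag}(1,1,\dots,1,-1)$, so that $\langle x,y\rangle = x^{T}Jy$. Let $V$ be the $(n+1)\times(n+1)$ matrix whose $j$th column is $v_j$. Then the $(i,j)$ entry of $V^{T}JV$ is $v_i^{T}Jv_j=\langle v_i,v_j\rangle$, so
\[
X = V^{T}JV, \qquad \det X = \det(V)^2\det J = -\det(V)^2 .
\]
Because $V$ is square (there are exactly $n+1$ vectors in the $(n+1)$-dimensional space $\mathbb{R}^{n,1}$), the determinant factorises as shown.

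Next I would interpret $\det V=0$. It holds if and only if the columns $v_1,\dots,v_{n+1}$ are linearly dependent, that is, if and only if they span a subspace of dimension at most $n$. Any such subspace is contained in some codimension~1 subspace. Conversely, vectors lying in a codimension~1 subspace span a space of dimension at most $n$, hence are dependent. Combining this with the displayed identity, and using $\det J=-1\neq 0$, gives $\det X=0$ if and only if the $v_i$ lie in a common codimension~1 subspace.

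There is no real obstacle. The only point needing care is the indefinite signature, where the usual Gram-matrix argument for positive definite forms does not transfer directly. The factorisation through $J$ handles this, since all that is used is $\det J\neq0$, that is, the nondegeneracy of $\langle\cdot,\cdot\rangle$. Note that $X$ itself can be degenerate when the span is a lightlike hyperplane; this causes no problem, because the criterion concerns only the dimension of the span, not the restriction of the form to it. The identity also gives $\det X\leq 0$ in general, with equality exactly in the degenerate case, which may be useful later.
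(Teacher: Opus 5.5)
Your proof is correct and is essentially the paper's argument. The paper writes $X=YDY^{T}$, with the $v_i$ as the rows of $Y$ and $D=\operatorname{diag}(1,\dots,1,-1)$, and gets $\det X=-(\det Y)^2$; this is your factorisation $X=V^{T}JV$ with $V=Y^{T}$, and both arguments conclude by noting that $\det V=0$ exactly when the $n+1$ vectors lie in a codimension~1 subspace.
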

\begin{proof}
Let $Y$ be the $(n+1)$-by-$(n+1)$ matrix with rows given by the vectors $v_1,v_2,\dots,v_{n+1}$. These vectors belong to a codimension 1 subspace of $\mathbb{R}^{n,1}$ if and only if $\det Y=0$. Observe that $X=YDY^T$, where $D=\operatorname{diag}(1,1,\dots,1,-1)$. Hence $\det X=-(\det Y)^2$, and the result follows immediately. 
\end{proof}

We can now prove Theorem~\ref{theoremA} (using essentially the same reasoning from \cite{FeKaSeTu2023}).

\begin{proof}[Proof of Theorem~\ref{theoremA}]
The theorem clearly holds when all the centres coincide, so let us suppose this is not so. For each horosphere $\Sigma_i$, we choose a lightlike vector $v_i$ with $\Sigma_i=\{v\in\mathcal{H}^n: \langle v,v_i\rangle = -1/\sqrt{2}\}$. Then, following Penner \cite{Pe2006} and Felikson et al.\, \cite[Remark~4.19]{FeKaSeTu2023}, we observe that  $\lambda(\Sigma_i,\Sigma_j)^2=-\langle v_i,v_j\rangle$. Lemma~\ref{lemma99} tells us that $\det A=0$, where $A_{ij}=\lambda(\Sigma_i,\Sigma_j)^2$, if and only if $v_1,v_2,\dots,v_{n+1}$ belong to a codimension 1 subspace of $\mathbb{R}^{n,1}$. Observe that the span of two linearly independent lightlike vectors contains a timelike vector. Consequently, if the vectors $v_i$ belong to a codimension 1 subspace of $\mathbb{R}^{n,1}$, then this subspace must intersect $\mathcal{H}^n$, so the centres through $v_i$ lie on the ideal boundary of a common hyperplane. Conversely, if the centres lie on the ideal boundary of a common hyperplane $W\cap \mathcal{H}^n$, for some codimension 1 subspace $W$, then each $v_i\in W$, so Lemma~\ref{lemma99} gives $\det A=0$.
\end{proof}

\section{Ptolemy's theorem}\label{ptolemy}

Here we prove  Theorems~\ref{theoremB1} and~\ref{theoremB2}. Recall that the hyperbolic distance $\varrho(u,v)$ between points $u,v\in\mathcal{H}^n$ satisfies $\cosh \varrho(u,v)= -\langle u,v\rangle$. Hence $\sinh^2\tfrac12\varrho(u,v)=-\tfrac12(\langle u,v\rangle+1)$.

\begin{proof}[Proof of Theorem~\ref{theoremB1}]
Suppose that $v_1,v_2,\dots,v_{n+1}$ lie on a common horosphere  in $H^n$. Then there is a lightlike vector $u$ with $\langle v_i,u\rangle=-\tfrac12$, for all $i$. The alternative is that $v_1,v_2,\dots,v_{n+1}$ lie on a common hypersphere in $H^n$. Then there is $v\in H^n$ and $s>1$ with $\langle v_i,v\rangle =-s$, for all $i$. Let $u=\delta v$, where $\delta = s+\sqrt{s^2-1}$. Then $\|u\|^2=-\delta^2\neq -1$ and
\[
\langle v_i,u\rangle =-\delta s = -\tfrac12(\delta^2+1)=\tfrac12(\|u\|^2-1).
\]
In both cases we have found $u\in\mathbb{R}^{n,1}$ with $\|u\|^2\neq \pm 1$ such that $\langle v_i,u\rangle=\tfrac12(\|u\|^2-1)$. Let $v_i'=v_i-u$. Then one can check that \(\langle v_i',v_j'\rangle = \langle v_i,v_j\rangle +1\). Hence
\begin{equation*}\label{eqn1}
B_{ij}=\sinh^2\tfrac12\varrho(v_i,v_j)=-\tfrac12\langle v_i',v_j'\rangle.
\end{equation*}
Now, if $v_1,v_2,\dots,v_{n+1}$ lie on a common hyperplane $\Pi$ in $H^n$, then there is a spacelike vector $w$ linearly independent from $u$ with $\langle v_i,w\rangle=0$, for all $i$. Let $w'=w-\lambda u$, where $\lambda =2\langle u,w\rangle/(\|u\|^2+1)$. Then one can check that $\langle v_i',w'\rangle=0$, for all $i$, and $w'\neq 0$, so $v_1',v_2',\dots,v_{n+1}'$ lie in a codimension 1 subspace of $\mathbb{R}^{n,1}$. Therefore $\det B=0$, by Lemma~\ref{lemma99}.

Suppose conversely that $\det B=0$. Then $v_1',v_2',\dots,v_{n+1}'$ lie in a codimension 1 subspace of $\mathbb{R}^{n,1}$, by Lemma~\ref{lemma99}, so there exists a nonzero vector $w'\in\mathbb{R}^{n,1}$ with $\langle v_i',w'\rangle=0$, for all $i$. Let $w=w'-\mu u$, where $\mu =2\langle u,w'\rangle/(\|u\|^2-1)$. Now, it cannot be that $w=0$, for if that were so then $w'=\mu u$ and $\langle v_i',w'\rangle=-\tfrac12\mu(\|u\|^2+1)$, which gives $\mu=0$ and hence $w'=0$. Therefore $w\neq 0$, and one can check that $\langle v_i,w\rangle =0$, for all $i$. Since $v_i$ are timelike it follows that $w$ is spacelike, so $v_i\in\Pi$, where $\Pi$ is the hyperplane $\{x\in\mathcal{H}^n:\langle x,w\rangle =0\}$, as required.
\end{proof}

Next we prove Theorem~\ref{theoremB2}. In the Lorentzian model of hyperbolic space, a branch of an equidistant hypersurface is a set  $\{x\in\mathcal{H}^n:\langle x, v\rangle = \lambda\}$, where $v$ is spacelike and $\lambda\neq 0$.

\begin{proof}[Proof of Theorem~\ref{theoremB2}]
We begin by embedding $\mathbb{R}^{n,1}$ in $\mathbb{R}^{n+1,1}$ by the map $(x_1,x_2,\dots,x_{n+1})\longmapsto (0,x_1,x_2,\dots,x_{n+1})$. Let $u=(1,0,\dots,0)$ and $v_i'=v_i+u$. Then  \(\langle v_i',v_j'\rangle = \langle v_i,v_j\rangle +1\), so, as before, $B_{ij}=-\tfrac12\langle v_i',v_j'\rangle$. By Lemma~\ref{lemma99}, we have $\det B=0$ if and only if the lightlike vectors $v_i'$ lie in a codimension 1 subspace of $\mathbb{R}^{n+1,1}$. 

Suppose that $v_i$ lie on a common horosphere, hypersphere,  hyperplane, or one branch of an equidistant hypersurface. Then there is a nonzero vector $v\in\mathbb{R}^{n,1}$ and $\mu\in\mathbb{R}$ with $\langle v_i,v\rangle=\mu$, for all $i$. Let $w=-\mu u+v$; then $\langle v_i',w\rangle=\langle v_i,v\rangle -\mu\langle u,u\rangle=0$, so $v_i'$ lie on a codimension 1 subspace of $\mathbb{R}^{n+1,1}$. 

Conversely, suppose that $v_i'$ belong to a codimension 1 subspace of $\mathbb{R}^{n+1,1}$. Since two linearly independent lightlike vectors span a plane containing a timelike vector, the codimension 1 subspace containing the $v_i'$ has a spacelike normal $w$, and such a normal also exists if the $v_i'$ all lie on a lightlike line. Let us write $w=\lambda u+v$, where $\lambda\in\mathbb{R}$, $v\neq 0$, and $\langle u,v\rangle=0$. Since $w$ is spacelike, $\lambda^2+\|v\|^2>0$. If $v$ is lightlike, then the equation $\langle v_i,v\rangle = -\lambda$ shows that all $v_i$ lie on a horosphere ($\lambda\neq 0$ in this case). If $v$ is spacelike, then the $v_i$ lie on a hyperplane if $\lambda=0$ and a branch of an equidistant hypersurface if $\lambda\neq 0$. If $v$ is timelike, then from $\lambda^2>-\|v\|^2$ we see that $v_i$ lie on a hypersphere.
\end{proof}

\section{Casey's theorem}\label{section4}

In this section we prove Theorem~\ref{theoremC}. Let $\Pi$ and $\Pi'$ be cooriented hyperplanes with spacelike unit normals $v$ and $v'$. When $\Pi$ and $\Pi'$ are disjoint and not tangent at infinity, we have $|\langle v,v'\rangle|=\cosh\varrho(\Pi,\Pi')$. Let $\gamma$ be the geodesic that intersects $\Pi$ and $\Pi'$ orthogonally. Then $\langle v,v'\rangle$ is $\cosh\varrho(\Pi,\Pi')$ if the normal coorientations coincide along $\gamma$ and $-\cosh\varrho(\Pi,\Pi')$ otherwise. Hence $\langle v,v'\rangle-1=2\sinh^2\tfrac12\varrho(\Pi,\Pi')$ in the first case and $\langle v,v'\rangle-1=-2\cosh^2\tfrac12\varrho(\Pi,\Pi')$ in the second. On the other hand, when $\Pi$ and $\Pi'$ intersect, we have $\langle v,v'\rangle=\cos\theta$, where $\theta$ is the angle between $v$ and $v'$, so $\langle v,v'\rangle-1=-2\sin^2\tfrac12\theta$. It follows that $
\langle v,v'\rangle-1 = 2\sigma(\Pi,\Pi')$. From this we see that if the hyperplanes $\Pi_i$ are cooriented with spacelike unit normals $v_i$, then 
\[
\langle v_i,v_j\rangle -1= 2\sigma(\Pi_i,\Pi_j)= 2C_{ij}.
\]
Let us also frame statements (i) to (iii) of Theorem~\ref{theoremC} in Lorentzian terms. For a spacelike vector $v$, we define $v^\perp=\{w\in\mathbb{R}^{n,1}:\langle w,v\rangle =0\}$; this is a codimension 1 subspace of $\mathbb{R}^{n,1}$ which intersects $\mathcal{H}^n$ in a hyperplane. Statement (i), that the hyperplanes $\Pi_i$ share a common tangent hyperplane at infinity, is equivalent to the statement that there is a spacelike unit normal $v$ with $|\langle v_i,v\rangle|=1$, for all $i$. Statement (ii), that the hyperplanes $\Pi_i$ share a common ideal boundary point, is equivalent to the statement that there is a lightlike vector $w$ with $\langle v_i,w\rangle=0$, for all $i$. Finally, statement (iii), that the hyperplanes $\Pi_i$ are all orthogonal to one hyperplane  and equally inclined to another, is equivalent to the statement that there are linearly independent spacelike unit normals $u$ and $v$  and $0\leq \lambda<1$ with $\langle v_i,u\rangle=0$ and $|\langle v_i,v\rangle|=\lambda$, for all $i$. These three statements concern the \emph{unoriented} hyperplanes $\Pi_i$, and accordingly the reformulations do not depend on the choice of sign for the spacelike unit normals~$v_i$.

\begin{proof}[Proof of Theorem~\ref{theoremC}]
We begin by assuming that one of the statements (i), (ii), or (iii) holds. Let $v_i$ be a spacelike unit normal to $\Pi_i$, for $i=1,2,\dots,n+1$; in the arguments that follow we will modify this initial, arbitrary choice of unit normals to fit our purpose. 

Suppose first that statement (i) holds: the hyperplanes $\Pi_i$ share a common tangent hyperplane at infinity.  Then there exists a unit spacelike vector $v$ with $\langle v_i,v\rangle =\pm 1$, for  all $i$. By changing the sign of some vectors $v_i$ we can ensure that $\langle v_i,v\rangle =1$, for all $i$. Let $w_i=v_i-v$. Then 
\(
\langle w_i,w_j\rangle = \langle v_i,v_j\rangle - 1=2C_{ij}
\)
and $\langle w_i,v\rangle=0$. Hence $w_i$ lies in the codimension 1 subspace $v^\perp$ of $\mathbb{R}^{n,1}$, so $\det C=0$, by Lemma~\ref{lemma99}.

Suppose now that statement (ii) holds: the hyperplanes $\Pi_i$ share a common ideal point. Then there exists a lightlike vector $v$ with $\langle v_i, v \rangle = 0$, for all $i$. After rescaling we can write $v=v_S+v_T$, where $v_T=(0,0,\dots,0,1)$ and $\langle v_S,v_T\rangle=0$. Let $w_i=v_i+\langle v_i,v_T\rangle v-v_T$. Then one can check that $\langle w_i,w_j\rangle=\langle v_i,v_j\rangle-1$ and $\langle w_i,v_S\rangle=0$. Hence $w_i\in v_S^\perp$, so $\det C=0$, by Lemma~\ref{lemma99}.

Last, suppose that statement (iii) holds, in which case the hyperplanes $\Pi_i$ are all orthogonal to one hyperplane and equally inclined to another. Then there is a real number $\lambda$ with $|\lambda|<1$ and linearly independent unit spacelike vectors $u$ and $v$ such that $\langle v_i,u\rangle=0$ and $\langle v_i,v\rangle =\lambda$, for all $i$ (after adjusting signs of the normals $v_i$). We can write $v=su+v^*$, where $s\in\mathbb{R}$ and $\langle u,v^*\rangle=0$. This case now splits in two depending on whether or not $v^*$ is lightlike. Suppose first that it is lightlike. If $\lambda=0$, then the hyperplanes $\Pi_i$ share the common ideal point $v^*$, so case (ii) holds. If $\lambda\neq 0$, then we can define $w_i=v_i-v^*/(2\lambda)$. Then $w_i\in u^\perp$ and $\langle w_i,w_j\rangle=\langle v_i,v_j\rangle -1=2C_{ij}$, so $\det C=0$, by Lemma~\ref{lemma99}.

Suppose now that $v^*$ is not lightlike. Let $w_i=v_i-(\lambda/\|v^*\|^2)v^*$. Then $w_i$ is perpendicular to both $u$ and $v$. Furthermore, $\langle w_i,w_j\rangle = \langle v_i,v_j\rangle-\lambda^2/\|v^*\|^2$, so $C_{ij}=\tfrac12(X_{ij}+Y_{ij})$, where $X_{ij}=\langle w_i,w_j\rangle$ and $Y_{ij}=\lambda^2/\|v^*\|^2-1$. Observe that $\operatorname{rank}(X)\leq n-1$ because $w_i\in u^\perp\cap v^\perp$ and $\operatorname{rank}(Y)\leq 1$. Hence $\operatorname{rank}(C)\leq n$, so $\det C=0$.

For the converse, suppose there are spacelike unit normals $v_i$ to the hyperplanes $\Pi_i$ for which $\det C=0$, where $C_{ij}=\tfrac12(\langle v_i,v_j\rangle -1)$. Then there exists $(\lambda_1,\lambda_2,\dots,\lambda_{n+1})\in\mathbb{R}^{n+1}\setminus\{0\}$ with
\[
\sum_{j=1}^{n+1}\lambda_j\langle v_i,v_j\rangle = \lambda, \quad \text{for $i=1,2,\dots,n+1$,}
\]
where $\lambda=\lambda_1+\lambda_2+\dots+\lambda_{n+1}$. Let $v=\sum_{j=1}^{n+1}\lambda_jv_j$. By scaling when $v\neq 0$ we can assume that $\|v\|^2$ either is 0 or has modulus 1. Observe that $\langle v_i,v\rangle=\lambda$, for all $i$, and
\[
\|v\|^2=\Big\langle \sum_{i=1}^{n+1}\lambda_iv_i,v\Big\rangle = \sum_{i=1}^{n+1}\lambda_i\langle v_i,v\rangle =\sum_{i=1}^{n+1}\lambda_i\lambda=\lambda^2.
\]
Hence either $\|v\|^2=1$ and $\lambda=\pm 1$ or $\|v\|^2=0$ and $\lambda=0$. In the first case we have $\langle v_i,v\rangle =1$ for all $i$ or $\langle v_i,v\rangle =-1$ for all $i$. Then all the hyperplanes $\Pi_i$ are tangent to the hyperplane $v^{\perp}$ at infinity (statement (i)). In the second case we have $\|v\|^2=0$ and $\lambda=0$. If $v\neq 0$ then $\langle v_i,v\rangle=0$, for all $i$, so $v$ is an ideal point of each of the hyperplanes $\Pi_i$ (statement (ii)).

The remaining possibility is that $v=0$. In this case $\sum_{j=1}^{n+1} \lambda_jv_j = 0$ and $\sum_{j=1}^{n+1}\lambda_j=0$, so the dimension of the subspace $V$ spanned by $v_i-v_{n+1}$, for $i=1,2,\dots,n$, is at most $n-1$. Consequently, $\operatorname{dim} V^\perp\geq 2$. Consider the linear functional $\phi(x)=\langle v_{n+1},x\rangle$ on $V^\perp$. Since $\operatorname{dim} V^\perp\geq 2$, there exists a nonzero vector $u\in\operatorname{ker} \phi$. Hence
\[
\langle v_i,u\rangle = \langle v_i-v_{n+1},u\rangle+\langle v_{n+1},u\rangle=0, 
\]
for all $i$. Now, since $\operatorname{dim} V^\perp\geq 2$ we can choose a vector $w\in V^\perp$ linearly independent from $u$. Let $\alpha=\langle v_{n+1},w\rangle$. Then $\langle v_i,w\rangle=\alpha$, for all $i$.

To complete the proof we consider the cases when $u$ is lightlike, spacelike, and timelike, in turn. Suppose first that $u$ is lightlike. Then $u$ is an ideal point common to each hyperplane $\Pi_i$ (statement (ii)). Suppose instead that $u$ is spacelike; by scaling we can assume that $\|u\|^2=1$. Let $z=w+tu$, for $t\in\mathbb{R}$. Then $\|z\|^2=\|w\|^2+2t\langle w,u\rangle +t^2$, so we can choose $t$ with $\|z\|^2>\alpha^2$. Let $w^*=z/\sqrt{\|z\|^2}$ and $\alpha^*=\alpha/\sqrt{\|z\|^2}$, so  $|\alpha^*|<1$. Then $w^*$ is a unit spacelike vector and $\langle v_i,w^*\rangle = \alpha^*$, for all $i$. Therefore each hyperplane $\Pi_i$ intersects $u^\perp$ orthogonally and intersects $(w^*)^\perp$ in a common unoriented angle $\theta$, where $\lvert\cos\theta\rvert=|\alpha^*|$ (statement (iii)).

Suppose finally that $u$ is timelike; by scaling we can assume that $\|u\|^2=-1$. Since $u$ is timelike the subspace $U=u^\perp$ is spacelike. We have $v_i\in U$, for all $i$. Let $y=w+\langle w,u\rangle u$. Then $\langle y,u\rangle=0$, so $y\in U$, and $y\neq 0$ since $u$ and $w$ are linearly independent. Observe that $\langle v_i,y\rangle =\alpha$, for all $i$. By scaling the spacelike vector $y$ (and scaling $\alpha$ accordingly) we can assume that $\|y\|^2=1$. After this scaling we have that $|\alpha|=|\langle v_i,y\rangle|\leq 1$. If $\alpha=\pm 1$, then $v_i=\pm y$, for each $i$, so all the hyperplanes $\Pi_i$ coincide and statement (i) is satisfied. If $\alpha=0$, then we can apply the argument for $u$ spacelike from the preceding paragraph (now with $y$ in place of $u$ and $u$ in place of $w$) to give statement (iii). 

Let us assume, then, that $\alpha\neq -1,0,1$. Let $z=\alpha^{-1}y+\sqrt{\alpha^{-2}-1}u$. Then $\|z\|^2=1$, so $z$ is spacelike, and $\langle v_i,z\rangle=1$, for all $i$. Hence all hyperplanes $\Pi_i$ are tangent at infinity to the hyperplane $z^\perp$ (statement (i)).
\end{proof}

\end{document}